
\documentclass{siamltex1213}
\usepackage[utf8]{inputenc} 
\usepackage[english]{babel}
\usepackage[T1]{fontenc}
\usepackage{epstopdf}
\usepackage[toc,page]{appendix}
\usepackage{caption}
\usepackage{subcaption}
\usepackage{amsmath}
\usepackage{graphicx}
\usepackage{epsfig}
\usepackage{amsfonts}
\usepackage{amssymb}
\usepackage{eufrak}
\usepackage[toc,page]{appendix}
\title{Least Squares Shadowing method for sensitivity analysis of differential equations}

\author{Mario CHATER\thanks{Department of Aeronautics and Astronautics, MIT, 77 Mass Ave, Cambridge, MA 02139, USA,  email addresses: \email{mario\_chater@hotmail.com} (Mario Chater), \email{niangxiu@mit.edu} (Angxiu Ni), \email{blonigan@mit.edu} (Patrick J. Blonigan), \email{qiqi@mit.edu} (Qiqi Wang)} \and Angxiu NI\footnotemark[1] \and Patrick J. Blonigan\footnotemark[1] \and Qiqi WANG\footnotemark[1]}

\begin{document}
\maketitle

\slugger{mms}{xxxx}{xx}{x}{x--x}%slugger should be set to mms, siap, sicomp, sicon, sidma, sima, simax, sinum, siopt, sisc, or sirev

\begin{abstract}
For a parameterized hyperbolic system $\frac{du}{dt}=f(u,s)$ the derivative of the ergodic average $\langle J \rangle = \lim_{T \to \infty}\frac{1}{T}\int_0^T J(u(t),s)$ to the parameter $s$ can be computed via the Least Squares Shadowing algorithm (LSS). We assume that the sytem is ergodic which means that $\langle J \rangle$ depends only on $s$ (not on the initial condition of the hyperbolic system). The algorithm solves a constrained least squares problem and, from the solution to this problem, computes the desired derivative $\frac{d\langle J \rangle}{ds}$. The purpose of this paper is to prove that the value given by the LSS algorithm approaches the exact derivative when the timespan used to formulate the least squares problem grows to infinity. It then illustrates the convergence result through a numerical example.  
\end{abstract}

\begin{keywords}Sensitivity analysis, Dynamical systems, Chaos, Uniform hyperbolicity, Ergodicity, Least squares shadowing\end{keywords}

\begin{AMS}
34A34, 34D30, 37A99, 37D20, 37D45, 37N99, 46N99, 65P99 
\end{AMS}

\pagestyle{myheadings}
\thispagestyle{plain}

\section{Introduction}

Consider the differential equation parameterized by $s \in \mathbb{R}$ and governing $u(t)\in U$ where $U$ is a Hilbert space :
\begin{align}
\label{difeq}
\left \{ \begin{array}{ll}
\frac{du}{dt}=f(u,s)\\
u(0)=u_0 & u_0 \in U
\end{array}\right.
\end{align} 
The differential equation is assumed to be uniformly hyperbolic (details in section \ref{sec:uniformhyperbolicity}). We are also given a $C^1$ cost function $J(u,s): U \times \mathbb{R} \to \mathbb{R}$ and assume that the system is \textit{ergodic}, i.e., the infinite time average :
\begin{equation}
\label{ergodicity}
\big\langle J\big\rangle(s)=\lim_{T \rightarrow +\infty} \frac{1}{T}\int_0^T J(u(t),s) dt
\end{equation}
depends on $s$ but does not depend on the initial condition $u(0)$. The differentiability of $\langle J \rangle $ with respect to $s$ has been proven by Ruelle \cite{differentiability}. Obtaining an estimation of $\frac{d\langle J \rangle}{ds}$ is crucial in many computational and engineering problems. Indeed, many applications involve simulations of nonlinear dynamical systems that exhibit a chaotic behavior. For instance, chaos can be encountered in the following fields : climate and weather prediction \cite{weather}, turbulent combustion simulation \cite{combustion}, nuclear reactor physics \cite{nuclear}, plasma dynamics in fusion \cite{plasma} and multi-body problems \cite{nbody}. The quantities of interest are often time averages or expected values of some cost function $J$. Estimating the derivative of $\langle J \rangle$ is particularly valuable in:\\
\begin{itemize}
\item \textbf{Numerical optimization}. The derivative of $\langle J \rangle$ with respect to a design parameter $s$ is used by gradient-based algorithms in order to efficiently optimize the design parameters in high dimensional design spaces (see \cite{designopti}).  
\item \textbf{Uncertainty quantification}. The derivative of $\langle J \rangle$ with respect to a parameter $s$ gives a useful information for assessing the error and uncertainty in the computed $\langle J \rangle$ (see \cite{uncertainty}).
\end{itemize}
For example, we could obtain a useful information about the impact of mankind on the climate by computing the derivative of the long time averaged global mean temperature with respect to the amount of anthropogenic emissions (\cite{emission} shows how sensitivity analysis is used in climate studies). In the simulation of a turbulent airflow over an aircraft, estimating the derivative of the long time averaged drag to a shape design parameter is of extreme importance for engineers allowing them to improve their design \cite{airfoil}. It has been shown that in many of these practical examples, the quantities of interest exhibit ergodic properties, popularly known as \textit{chaotic hypothesis} \cite{chaotichypothesis}, \cite{chaotichypothesis2}. As opposed to Kalman filter and Bred vector approaches, we do not aim to infer the state of the system at any particular time. We perform sensitivity analysis only with respect to the parameters of the system since our objective function  only depends on statistics (long-time averages) of the dynamical system.\\

When it comes to computing $\frac{d\langle J \rangle}{ds}$, conventional methods based on linearizing the initial value problem (\ref{difeq}) become ill-conditioned when the system is chaotic. They compute derivatives that are orders of magnitude too large and the error grows exponentially larger as the simulation runs longer \cite{butterfly},\cite{explosion}. This failure is due to the so-called \textit{butterfly effect} and the explanation has been published by Lea et al. \cite{butterfly}.\\
Some algorithms have been developed to overcome this failure. Lea et al. proposed the ensemble adjoint method which applies the adjoint method to many random trajectories, then averages the computed derivatives \cite{butterfly}, \cite{ensembleadjoint}. However, the algorithm is computationally expensive even for small dynamical system such as Lorenz's one. Based on the fluctuation dissipation theorem, Abramov and Majda provided an algorithm that successfully computes the desired derivative \cite{abramov}. Nonetheless, this algorithm assumes the dynamical system to have an equilibrium distribution similar to the Gaussian distribution, an assumption often violated in very dissipative systems. Recent work by Cooper and Haynes has alliviated this limitation by introducing a nonparametric method to estimate the equilibrium distribution \cite{cooper}. More methods have been developed to compute $\frac{d\langle J \rangle}{ds}$, in particular the \textit{Least Squares Shadowing (LSS)} algorithm which computes it by solving a constrained least squares problem \cite{explosion}. The big advantage of this method is its simplicity since the least squares problem can easily be formulated and efficiently solved as a linear system. Compared to the previously presented methods, LSS is less sensitive to the dimension of the dynamical system and doesn't require any explicit knowledge about its steady-state distribution in phase space.\\

This paper provides a theoretical foundation for LSS by proving that it gives a useful estimation of $\frac{d\langle J \rangle}{ds}$ when the dynamical system is a uniformly hyperbolic flow. Compared to the discrete case (uniformly hyperbolic map) for which we already have a proof of convergence \cite{proof}, the continuous case is more difficult to deal with due to the apparition of the \textit{neutral subspace} (details in section \ref{sec:uniformhyperbolicity}). However, it is very important to treat the continuous case since most applications and real-life problems require a continuous description of the physics and involve differential equations.\\

In the next section, the mathematical formulation of convergence is introduced as well as theorem LSS which will be proved in the following sections. Section \ref{sec:uniformhyperbolicity} presents the concept of uniform hyperbolicity for the readers who are not familiar with the subject. Section \ref{sec4} points out the new behaviour and properties that come with continuous dynamical systems (as opposed to discrete maps). Section \ref{sec5} defines the shadowing direction and proves its existence as well as uniqueness. Section \ref{sec6} shows that the derivative of $\langle J \rangle$ can be computed using the shadowing direction and bounds the upper error. Section \ref{sec7} then demonstrates that the least squares problem gives a good approximation of the shadowing direction. Then, section \ref{sec8} uses all the previous results and  concludes the proof of theorem LSS by showing that the estimation error vanishes as the least squares problem increases in size. Finally, section \ref{sec9} presents a numerical example and illustrates the convergence result.

%Let $ \{ u(t,s)\in U /t\in]-\infty, +\infty[ \}$ be a family of trajectories parameterized by $s \in \mathbb{R}$ such that, 
\section{LSS convergence theorem}
\label{sec2}

We begin by presenting the convergence result for the \textit{Least Squares Shadowing} method. For a trajectory $\{u(t)\}_{t\in(0,T)}$ satisfying the differential equation (\ref{difeq}), \textit{LSS} attempts to compute the derivative $\frac{d<J>}{ds}$ via
\begin{theorem}[THEOREM LSS]
Under ergodicity and hyperbolicity assumptions,
\begin{align*}
\frac{d \langle J \rangle}{ds}(s)&=\lim_{T \to \infty} \int_0^T \Big[(DJ(u(t),s))v^{\{T\}}(t)+\partial_sJ(u(t),s)+\eta^{\{T\}}(t)(J(u(t),s)-\langle J\rangle(s))\Big]dt
\end{align*}
where $(v^{\{T\}},\eta^{\{T\}})(t)\in U \times \mathbb{R}$, $t \in (0,T)$ is the solution to the constrained least squares problem :
\begin{equation}
\label{constraint}
\begin{split}
&\min \int_0^T( \|v^{\{T\}}\|^2+\alpha (\eta^{\{T\}})^2)dt \\
&\textrm{s.t.} \quad \frac{dv^{\{T\}}}{dt}=(Df(u,s))v^{\{T\}}+\partial_s f(u,s)+\eta^{\{T\}}f(u,s),
\end{split}
\end{equation}
where $\alpha$ is any positive constant and $||.||$ is the Euclidean norm in $U$.\\
\end{theorem}

Here the linearized operators are defined as :

\begin{equation}
\begin{split}
(DJ(u,s))v&:=(D_vJ)(u,s):=\lim_{\epsilon \to 0}\frac{J(u+\epsilon v,s)-J(u,s)}{\epsilon}\\
(Df(u,s))v&:=(D_vf)(u,s):=\lim_{\epsilon \to 0}\frac{f(u+\epsilon v,s)- f(u,s)}{\epsilon}\\
\partial_sJ(u,s)&:=\lim_{\epsilon \to 0}\frac{J(u,s+\epsilon)-J(u,s)}{\epsilon}\\
\partial_s f(u,s)&:=\lim_{\epsilon \to 0}\frac{f(u,s+\epsilon)-f(u,s)}{\epsilon}
\end{split}
\end{equation}
$(DJ)$,$(\partial_sJ)$,$(Df)$ and $(\partial_sf)$ are a $1\times m$ vector, a scalar, an $m\times m$ matrix and an $m \times 1$ vector, respectively, representing the partial derivatives.\\

\section{Uniform hyperbolicity}
\label{sec:uniformhyperbolicity}
In order to proceed to the presentation of the uniform hyperbolicity properties, we need first to derive from equation (\ref{difeq}) the tangent linear model:
\begin{align}
\label{TLM}
\left \{ \begin{array}{ll}
\frac{dv}{dt}=Df(u,s)v\\
v(0)=v_0 & v_0 \in U
\end{array}\right.
\end{align}
where $\{v\}_t$ is the perturbation around the reference trajectory which solves (\ref{difeq}) when the differential equation is linearized locally around this trajectory. Based on the linearity of (\ref{TLM}), we deduce that :
\begin{align}\label{eq1}
v(t)=M(u_0,t) v_0
\end{align}
where $M(u_0,t)$ is a linear operator. Intuitively, $M$ should be understood as follows: the initial perturbation $v_0$ of the reference trajectory becomes $v(t)$ after time lag $t$. We can easily derive some general properties for the operator $M$:
\begin{align}
\frac{dM}{dt}=Df\cdot M
\end{align}
and we also know that $M(u_0, 0)$ is the identity operator for any $u_0$.\\

We say that the dynamical system (\ref{difeq}) has a compact, global, uniformly hyperbolic attractor $\Lambda \subset U$ at $s$ if:\\
\begin{enumerate}
\item For all $u_0 \in U$, $\textrm{dist}(\Lambda,u(t))\xrightarrow{t\to \infty}0$ where $\textrm{dist}$ is the distance arising from the inner product in $U$.
\item There is a $C\in (0,+\infty)$ and $\lambda \in (0,1)$, such that for all $u \in \Lambda$, there is a splitting of $U$ representing the space of perturbations around $u$ :
\begin{equation}
\label{space_decomp}
U = V^+(u)\oplus V^-(u)\oplus V^0(u)
\end{equation}
where the subspaces are :
\begin{itemize}
\item $V^+(u):=\{v\in U /\quad \|M(u,t)\cdot v\|\leq C\lambda^{-t}\|v\|, \forall t <0\}$ is the \textit{unstable subspace} at $u$,
\item $V^-(u):=\{v\in U /\quad \|M(u,t)\cdot v\|\leq C\lambda^{-t}\|v\|, \forall t >0\}$ is the \textit{stable subspace} at $u$.
\item $V^0(u):=\{ \alpha f(u,s), \forall \alpha \in \mathbb{R}\}$ is the \textit{neutral subspace} at $u$.
\end{itemize}
$V^-(u)$,$V^+(u)$ and $V^0(u)$ are all continuous with respect to $u$. \\
\end{enumerate}
If $r=r^+ + r^- +r^0$ with $r^+\in V^+(u)$, $r^-\in V^-(u)$, $r^0\in V^0(u)$ and $u\in \Lambda$, the continuity of the three subspaces and the compactness of $\Lambda$ implies that:
\begin{align}
\inf_{u,r^+,r^-,r^0}\frac{\|r^++r^-+r^0\|}{\max(\|r^+\|,\|r^-\|,\|r^0\|)}=\beta>0
\end{align}
This is because if $\beta=0$, then by the continuity of $V^+(u)$, $V^-(u)$, $V^0(u)$ and the compactness of $\Lambda$, there must be a $(u,r^+,r^-,r^0)$ such that $\max(\|r^+\|,\|r^-\|,\|r^0\|)=1$ and $r^++r^-+r^0=0$ which contradicts assumption (\ref{space_decomp}). Thus:

\begin{align}
\label{beta_ineq}
\|r^+\|\leq\frac{\|r\|}{\beta} \qquad  \|r^-\|\leq\frac{\|r\|}{\beta} \qquad \|r^0\|\leq\frac{\|r\|}{\beta}
\end{align}

The \textit{stable}, \textit{unstable} and \textit{neutral subspaces} are also \textit{invariant} under $M$, which means that for all $t$ and $t'$:
 
\begin{align}
\label{invariance}
\left \{ \begin{array}{ll}
v\in V^+(u(t)) &\Leftrightarrow \quad M(u(t),t')v\in V^+(u(t+t'))\\
v\in V^-(u(t)) &\Leftrightarrow \quad M(u(t),t')v\in V^-(u(t+t'))\\
v\in V^0(u(t)) &\Leftrightarrow \quad M(u(t),t') v\in V^0(u(t+t'))\\
\end{array}\right.
\end{align}

 Because of their relative simplicity, studies of uniformly hyperbolic dynamical systems (also known as "ideal chaos") have provided a lot of insight into the properties of chaotic dynamical systems \cite{hyperbolic}. Although most real-life dynamical systems are not uniformly hyperbolic, they can be classified as \textit{quasi-hyperbolic}: results obtained on hyperbolic systems can often be generalized to them \cite{quasi}. This proof covers the convergence os LSS for uniform hyperbolic flows, nevertheless, numerical results have shown that the algorithm also works for non-ideal chaos \cite{explosion}.\\

\section{Neutral subspace and time dilation}
\label{sec4}

We introduce time dilation: "running time" becomes $\tau(t)$. The differential equation becomes: 

\begin{align}
\label{difeqdilated}
\left \{ \begin{array}{ll}
\frac{du(\tau(t))}{dt}=(1+\eta(t))f(u(\tau(t)),s)\\
\eta(t)=\frac{d\tau}{dt}-1\\
u(\tau(0))=u_0 \\
\tau(0)=0
\end{array}\right.
\end{align} 

The cost function becomes:
\begin{align}
\langle J \rangle = \lim_{t\to \infty}\frac{1}{\tau(T)}\int_{0}^T J(u(\tau(t),s),s)(1+\eta(t))dt
\end{align}

The new tangent linear model is:
\begin{align}
\left \{ \begin{array}{ll}
\frac{dv(\tau(t))}{dt}=(1+\eta(t))Df(u(\tau(t)),s)\cdot v(\tau(t))+\delta(t)f(u(\tau(t)),s)\\
\tau(0)=0\\
u(0)=u_0 & u_0 \in U
\end{array}\right.
\end{align}

where $v$ is the perturbation of $u$ and $\delta$ the perturbation of $\eta$.
Consequently, if the parameter $s$ changes infinitesimally and for a reference solution where $\eta=0$ (which means $t=\tau$), $(v,\delta)$ should satisfy the following differential equation: 
\begin{align}
\frac{dv(t)}{dt}=Df(u(t),s)\cdot v(t)+\delta(t)f(u(t),s)+\frac{\partial f}{\partial s}(t)
\end{align}

The previous equation has many solutions but we can define and show that one "canonical" solution exists ($v(t)$ has no component in the \textit{neutral subspace}).\\

\section{Structural stability and the shadowing direction}
\label{sec5}
In this section, we will prove a variant of the shadowing lemma for the purpose of defining the shadowing direction and prove its existence and uniqueness. The hyperbolic structure ensures the structural stability of the attractor $\Lambda$ under perturbation in $s$ \cite{struc}, \cite{struc2}. Without loss of generality, we will assume that $s=0$ and $\tau(t)=t$ (no time dilation).\\

\begin{theorem}[Shadowing trajectory]
\label{theorem1}
If the system is uniformly hyperbolic and $M$ continuously differentiable with respect to $s$ and $u$, then for all $\{u_0(t), t\in \mathbb{R}\}\subset\Lambda$ satisfying the differential equation (\ref{difeqdilated}) with $s=0$ and $\tau_0(t)=t$, there is a $L>0$ such that for all $|s|<L$ there is $\{(u_s(\tau_s(t)),\tau_s(t)),t\in\mathbb{R}\}$ satisfying $||u_s(\tau_s(t))-u_0(t)||<L$, $||\frac{d\tau_s(t)}{dt}||=||1+\eta_s(t)||<L$  and $\frac{du_s(\tau_s(t))}{dt}=(1+\eta_s(t))f(u_s(\tau_s(t)),s)$ for all $t\in\mathbb{R}$. Furthermore, $u_s$ and $\tau_s$ are uniformly continuously differentiable with respect to $s$.\\
\end{theorem}
The uniform continuous differentiability of $u_s$ and $\tau_s$ means that for all $s \in (-L,L)$ and $\epsilon >0$ there is a $\delta >0$ such that if $|s-s'|<\delta$ then $ \|\frac{du_s(\tau_s(t))}{ds}-\frac{du_{s'}(\tau_{s'}(t))}{ds}\|< \epsilon$ and $|\frac{d\tau^s}{ds}(t)-\frac{d\tau_{s'}}{ds}(t)|< \epsilon$ for all $t$.\\

To prepare for the proof, let $\mathbf{B}$ be the space of $\mathcal{C}^\infty$ bounded functions in $U$ and $V_t$ the hyperplane of $U$ defined by $V_t=V^+(u_0(t))\oplus V^-(u_0(t))$. We introduce $\mathbf{V}\subset \mathcal{L}^\infty$ as the space of bounded functions $\{r(t),t\in\mathbb{R}\}$ such that $r(t) \in V_t$ for all $t \in \mathbb{R}$ ($r(t)$ has no components in the \textit{neutral subspace}).
Finally, by considering the space $\mathbf{T}$ of $\mathcal{C}^\infty$ bounded functions in $\mathbb{R}$, we denote $\mathbf{A}$ the product of $\mathbf{V}$ by $\mathbf{T}$ : 
$$\mathbf{A}=\mathbf{V} \times \mathbf{T}$$
We then introduce the notation $(\textbf{r},\boldsymbol\tau)=\{(r(t),\tau(t)),t\in\mathbb{R}\}\in \mathbf{A}$ where $r(t)\in \mathbf{V}$, $\tau(t) \in\mathbf{T}$ and define the norm :
$$||(\textbf{r},\boldsymbol\tau)||_\mathbf{A}=\sup_{t\in\mathbb{R}}\big(\|r(t)\|\big)+\sup_{t\in\mathbb{R}}\big(|\tau(t)|\big)=\|\textbf{r}\|_\infty + \|\boldsymbol\tau\|_\infty$$
As defined above, the space $\mathbf{A}$ is a Banach space. 
%The map $\varphi^T(\cdot,s) : U \rightarrow U$ could be considered a "discretization" of a family of continuous trajectories using a uniform stepsize of $T$. In other words, if $\{u(t,s) / \quad t \in ]-\infty,+\infty[\}$ is a continuous trajectory belonging to this family, we have :

We can now define the map $F$ : $\mathbf{A} \times \mathbb{R} \to \mathbf{B}$ as :
$$ \forall (\mathbf{r},\boldsymbol\tau)\in \mathbf{A},\forall s\in \mathbb{R}, \quad F((\mathbf{r},\boldsymbol\tau),s)=\Big\{\frac{d(u_0+r)}{dt}(\tau(t))-(1+\eta(t))f\big((u_0+r)(\tau(t)),s\big),t \in \mathbb{R} \Big\}$$
where, as seen previously, $1+\eta(t)=\frac{d\tau(t)}{dt}$.\\
For a given $s$, $F((\mathbf{r},\boldsymbol\tau),s)=\mathbf{0}$ if and only if $\{(u_0+r)(t), t\in \mathbf{R}\}$ satisfies the differential equation (\ref{difeqdilated}) where $\tau$ is the time dilation function. We use the implicit function theorem to complete the proof, which requires F to be differentiable with respect to $(\mathbf{r},\boldsymbol\tau)$ and its derivative to be non-singular at $\mathbf{r}=\mathbf{0}$, $\boldsymbol\tau=\textbf{Id}$ \footnote{$\textbf{Id}$ is the identity function: $\tau(t)=t$ for all $t\in \mathbb{R}$} and $s=0$.\\

\begin{lemma}
Under the conditions of theorem \ref{theorem1}, F has a Fréchet derivative at all $(\mathbf{r},\boldsymbol\tau)\in \mathbf{A}$ and $|s|<L$:
\begin{align}
(DF((\mathbf{r},\boldsymbol\tau),s))(\mathbf{w},\boldsymbol\epsilon)=&\Big\{\frac{dw(\tau(t))}{dt}-(1+\eta(t))Df((u_0+r)(\tau(t)),s)\cdot w(\tau(t))-\epsilon(t)f((u_0+r)(\tau(t)),s),t\in \mathbb{R}\Big\}
\end{align}
where $(\mathbf{w},\boldsymbol\epsilon) \in \mathbf{A}$.\\
\end{lemma}

The proof is quite straight forward and based on the fact that $Df$ and $f$ are uniformly continuous and bounded on the compact set $\Lambda$. \\

\begin{lemma}
Under conditions of theorem \ref{theorem1}, the Fréchet derivative of $F$ at $(\mathbf{r},\boldsymbol\tau)=(\mathbf{0},\mathbf{1})$ and $s=0$ is a bijection.
\end{lemma}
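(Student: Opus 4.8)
The plan is to show that the linear operator $L := DF((\mathbf{0},\mathbf{1}),0)$ is both injective and surjective from $\mathbf{A}$ onto $\mathbf{B}$; boundedness is already established in the previous lemma, so by the bounded inverse theorem the inverse will automatically be bounded. Explicitly, by the formula just proved, $L(\mathbf{w},\boldsymbol\epsilon) = \{w_i - (D\varphi_0(u^0_{i-1},h))w_{i-1} - h\epsilon_{i-1} f(u_i^0,s)\}_{i\in\mathbb{Z}}$, using $\partial_h\varphi_0(u^0_{i-1},h) = f(u^0_i,s)$ from the paper's identities. So given $\mathbf{b} = \{b_i\} \in \mathbf{B}$ I must solve the recursion $w_i = (D\varphi_0(u^0_{i-1},h))w_{i-1} + h\epsilon_{i-1} f(u_i^0) + b_i$ for a bounded sequence $(\mathbf{w},\boldsymbol\epsilon)$ with each $w_i \in V_i = V^+(u_i^0)\oplus V^-(u_i^0)$, and to show this solution is unique.

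First I would use the invariant splitting: decompose $b_i = b_i^+ + b_i^- + b_i^0$ with $b_i^0 = \gamma_i f(u_i^0)$, and similarly track the components of $w_i$. The neutral component of the recursion is where the $\tau$-perturbations earn their keep: since $(D\varphi_0(u^0_{i-1},h))$ maps $V^0(u_{i-1}^0)$ onto $V^0(u_i^0)$ by the neutrality identity $(D\varphi_0(u_{i-1}^0,h))f(u_{i-1}^0) = f(u_i^0)$, the neutral part of the equation reads (in the scalar coordinate along $f$) $0 = (\text{neutral part of } w_{i-1}) + h\epsilon_{i-1} + \gamma_i$; because we have constrained $w_i \in V_i$ to have \emph{no} neutral component, this collapses to $h\epsilon_{i-1} = -\gamma_i$, which determines $\boldsymbol\epsilon$ uniquely and boundedly (here $\gamma_i$ is bounded since $\mathbf{b}$ is bounded and $\|f\|$ is bounded below away from zero on $\Lambda$, using $\beta$ from \eqref{beta_ineq}). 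This reduces the problem to solving $w_i^{\pm}$-recursions on the stable and unstable bundles with a modified but still bounded forcing term.

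Next, on the stable subbundle I would solve forward: $w_i^- = \sum_{k\le i} (D\varphi_0)^{i-k}(\text{forcing}_k^-)$, which converges in norm because $\|(D\varphi_0(u,t))v\| \le C\lambda^t\|v\|$ for $v \in V^-$, $t>0$, giving a geometric bound $\|w_i^-\| \le \frac{C}{1-\lambda}\sup_k\|\text{forcing}_k\|$. On the unstable subbundle I would solve backward, summing $(D\varphi_0)^{-1}$ powers, which converge because $\|(D\varphi_0(u,t))v\| \le C\lambda^{-t}\|v\|$ for $v \in V^+$, $t<0$. In both cases uniform hyperbolicity with the \emph{same} constants $C,\lambda$ for all $u\in\Lambda$ gives a bound on $\|\mathbf{w}\|_\infty$ independent of $i$, so $(\mathbf{w},\boldsymbol\epsilon)\in\mathbf{A}$; surjectivity follows. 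For injectivity, if $L(\mathbf{w},\boldsymbol\epsilon)=\mathbf{0}$ then the same component analysis forces $\boldsymbol\epsilon=\mathbf{0}$, and then $w_i^-$ satisfies a homogeneous forward recursion with $\|w_i^-\|\le C\lambda^{i-k}\|w_k^-\|$ for all $k\le i$, which upon letting $k\to-\infty$ (using boundedness of $\mathbf{w}$) forces $w^-\equiv 0$; symmetrically $w^+\equiv 0$ by letting the index run the other way. Hence $L$ is a bijection.

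The main obstacle is the bookkeeping around the neutral direction: one must be careful that the constraint $w_i \in V_i$ (no neutral component) is exactly what makes the system square — without the $\boldsymbol\epsilon$ (time-dilation) degrees of freedom the neutral part of the equation would be unsolvable, and without dropping the neutral component of $\mathbf{w}$ it would be underdetermined. Making this precise requires the continuity of the splitting $u\mapsto(V^+,V^-,V^0)$ and the uniform transversality bound $\beta>0$ from \eqref{beta_ineq} to guarantee that the projections onto the three subspaces are uniformly bounded operators, so that decomposing a bounded sequence yields bounded component sequences; this is the step where compactness of $\Lambda$ is essential. The geometric-series estimates themselves are routine once the splitting is handled cleanly.
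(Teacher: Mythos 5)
Your proposal is correct and follows essentially the same route as the paper: you construct the preimage by projecting onto the invariant splitting, read off $\boldsymbol\epsilon$ from the neutral component (which is exactly the paper's formula $\epsilon_i=-\tfrac{1}{h}\langle r_{i+1}^0,\partial_h\varphi_0\rangle/\|\partial_h\varphi_0\|^2$), sum forward along the stable bundle and backward along the unstable bundle for $\mathbf{w}$, bound everything via $\beta$ and the uniform constants $C,\lambda$, and prove injectivity by showing any nonzero bounded homogeneous solution would be unbounded. The only cosmetic difference is your geometric bound should read $C/(1-\lambda^h)$ rather than $C/(1-\lambda)$, since the per-step contraction over time $h$ is $\lambda^h$; the convergence argument is unaffected.
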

\begin{proof}
The Fréchet derivative of $F$ at $(\mathbf{r},\boldsymbol\tau)=(\mathbf{0},\mathbf{Id})$ and $s=0$ in the direction $(\mathbf{w},\boldsymbol\epsilon)$ is :
$$(DF((\mathbf{0},\mathbf{Id}),0))(\mathbf{w},\boldsymbol\epsilon)=\{\frac{dw(t)}{dt}-Df(u_0(t),0)w-\epsilon(t) f(u_0(t),s),t\in \mathbb{R}\}$$
To prove its bijectivity, we only need to show that for any $\mathbf{g}=\{g(t),t\in\mathbb{R}\} \in \mathbf{B}$ there is a unique $(\mathbf{w},\boldsymbol\epsilon)\in \mathbf{A}$ such that $(DF((\mathbf{0},\mathbf{Id}),0))(\mathbf{w},\boldsymbol\epsilon)=\mathbf{g}$\\
In this case, we can find an analytical expression for the pre-image of $\mathbf{g}$. Let $(\mathbf{w},\boldsymbol\epsilon)$ be defined as :
\begin{align}\label{shaddir}
\left \{ \begin{array}{ll}
w(t)=-\int_{-\infty}^{+\infty} M(u_0(x),t-x)\cdot\bigg(g^+(x)\mathbf{1}_{\{t<x\}}+g^-(x)\mathbf{1}_{\{t>x\}}\bigg) dx\\
\epsilon(t)=-g^0(t)\cdot \frac{f(u_0(t),0)}{||f(u_0(t),0)||^2}
\end{array}\right.
\end{align}
where $g^+(x)\in V^+(u_0(x))$, $g^-(x)\in V^-(u_0(x))$ and $g^0(x)\in V^0(u_0(x))$. We can verify that $\frac{dw}{dt}-(Df(u_0(t),s))w(t)-\epsilon(t) f(u_0(t),s)=g(t)$ for all $t$.\\
We still have to ensure that $(\mathbf{w},\boldsymbol\epsilon)$ belongs to $\mathbf{A}$. Based on (\ref{invariance}), we notice that the $w(t)$ we have just defined belongs to $V_{t}=V^+(u_0(t))\oplus V^-(u_0(t))$. Since $V^+(u_0)$, $V^-(u_0)$ and $V^0(u_0)$ are continuous with respect to $u_0$ and $\Lambda$ is compact:
\begin{align}
\max(\|g^+(t)\|,\|g^-(t)\|,\|g^0(t)\|)\leq \frac{\|g(t)\|}{\beta}\leq \frac{\|\mathbf{g}\|_{\mathbf{B}}}{\beta} \quad \textrm{for all } t
\end{align}
where $\beta >0$.\\ 
Consequently, for all $i$:
\begin{align}
\|w(t)\|&\leq \int_{t}^{+\infty} \|M(u_0(x),t-x)g^+(x)\|dx + \int_{-\infty}^{t} \|M(u_0(x),t-x)g^-(x)\|dx \\
&\leq \int_{t}^{+\infty} C\lambda^{x-t}\frac{\|\mathbf{g}\|_\infty}{\beta} dx + \int_{-\infty}^{t} C\lambda^{t-x}\frac{\|\mathbf{g}\|_\infty}{\beta} dx \\
&\leq \frac{-2C\|\mathbf{g}\|_\infty}{ln(\lambda)}
\end{align}  
thanks to the uniform hyperbolicity properties of $M$. Thus, $\mathbf{w}$ is bounded and we can conclude that $\mathbf{w}\in \mathbf{\mathrm{V}}$. 
On the other hand, we can easily show that for all $t$:
\begin{align}
\epsilon (t) &\leq \frac{\|g^0(t)\|}{\|f(u_0(t),0)\|}\\
&\leq \frac{\|\mathbf{g}\|_{\infty}}{\beta m}
\end{align}
where $m=\inf_{u\in\Lambda}\big\{\|f(u,0)\|\big\}>0$. Consequently, $\boldsymbol\epsilon$ is uniformly bounded which leads to $\boldsymbol\epsilon \in \mathrm{\textbf{T}}$ and $(\mathbf{w},\boldsymbol\epsilon) \in \mathbf{A}$.\\

Because of linearity, uniqueness of $(\mathbf{w}, \boldsymbol\epsilon)$ such that $(DF((\mathbf{0},\mathbf{Id}),0))(\mathbf{w},\boldsymbol\epsilon)=\mathbf{g}$ only needs to be proved for $\mathbf{g}=\mathbf{0}$. Since $U= V^+(u_0)\oplus V^-(u_0) \oplus V^0(u_0)$, $g(t)=0$ is equivalent to $g^+(t)=g^-(t)=g^0(t)=0$. Thanks to property (\ref{invariance}), by splitting $w(t)=w^+(t) + w^-(t)$ and knowing that $\epsilon(t)f(u_0(t),s)\in V^0(u^0)$ , we have:
\begin{align}
0=g^+(t) + g^-(t)=\big(\frac{dw^+}{dt}-(Df(u_0(t),0))w^+(t)\big)+\big(\frac{dw^-}{dt}-(Df(u_0(t),0))w^-(t)\big)
\end{align} 
where the two parentheses are in $V^+(u_0(t))$ and $V^-(u_0(t))$ respectively. Again knowing that $U= V^+(u_0)\oplus V^-(u_0) \oplus V^0(u_0)$, both parentheses should be equal to zero. This is true for all $t$, so we obtain the following two differential equations:
\begin{align}
\frac{dw^+}{dt}=(Df(u_0(t),0))w^+(t)\\
\frac{dw^-}{dt}=(Df(u_0(t),0))w^-(t)
\end{align} 
We notice that these differential equations are very similar to the linear tangent model (\ref{TLM}). Their solution is explosive unless $w(t)=0$ for all $t$. 
Showing that $\epsilon(t)=0$ is trivial:
\begin{align}
0=g^0(t)=-\epsilon(t)f(u_0(t),s)
\end{align}
Since $\|f(u_0(t),s)\| \geq m>0$\footnote{We assume that this inequality holds almost everywhere, otherwise our dynamical system would allow many degenerate trajectories.} then $\boldsymbol\epsilon=0$. This proves the uniqueness of $(\mathbf{w},\boldsymbol\epsilon)$ for $\mathbf{g}=0$.
\end{proof}\\

\begin{proof}(\textit{of theorem \ref{theorem1}})
Since $F((\mathbf{0},\mathbf{Id}),0)=\{\frac{du_0}{dt}(t) - f(u_0(t),0),t\in\mathbb{R}\}=\mathbf{0}$, $(\mathbf{0},\mathbf{Id})$ is a zero point of $F$ at $s=0$. Based on this information and on the two previous lemmas, the implicit function theorem states that there exist $L>0$ such that for all $|s|<L$ there is a unique $(\mathbf{r}^s,\boldsymbol\tau^s)$ satisfying $\|(\mathbf{r}^s,\boldsymbol\tau^s)\|_{\mathbf{A}}<L$ and $F((\mathbf{r}^s,\boldsymbol\tau^s),s)=\mathbf{0}$. 
Furthermore, this $(\mathbf{r}^s,\boldsymbol\tau^s)$ is continuously differentiable to $s$, i.e., $\frac{d(\mathbf{r}^s,\boldsymbol\tau^s)}{ds}\in \mathbf{A}$ is continuous with respect to $s$ in the $\mathbf{A}$ norm. By the definition of derivatives (in $\mathbf{A})$, $\frac{d(\mathbf{r}^s,\boldsymbol\tau^s)}{ds}=\big\{(\frac{dr^s}{ds}(t),\frac{d\tau^s}{ds}(t)),t\in\mathbb{R}\big\}$. Continuity of $\frac{d(\mathbf{r}^s,\boldsymbol\tau^s)}{ds}$ in $\mathbf{A}$ then implies that 
$\frac{dr_i^s}{ds}$ and $\frac{d\tau_i^s}{ds}$ are uniformly continuous with respect to $s$. By defining:
\begin{align}
\{(u^s(t),\tau^s(t)), t\in \mathbb{R}\}=\{(u_0(t)+r^s(t),\tau^s(t)), t\in \mathbb{R}\}
\end{align}
we finally obtain the results of theorem (\ref{theorem1}).
\end{proof}\\

This theorem states that for a trajectory $\{u_0(t),t\in \mathbb{R}\}$ satisfying (\ref{difeq}) for $s=0$, there is $\{(u^s(t),\tau^s(t),t\in \mathbb{R}\}$ satisfying the time dilated differential equation (\ref{difeqdilated}) at nearby values of $s$. In addition, $(\mathbf{u^s},\boldsymbol\tau^s)$ \textit{shadows} $(\mathbf{u_0},\mathbf{Id})$ meaning that  $(\mathbf{u^s},\boldsymbol\tau^s)$  is close to $(\mathbf{u_0},\mathbf{Id})$ when $s$ is close to 0. Also, $\frac{d(\mathbf{u}^s,\boldsymbol\tau^s)}{ds}$  exists and is uniformly bounded.\\

The \textit{shadowing direction} $(v^{\{\infty\}}(t),\eta^{\{\infty\}}(t),t\in\mathbb{R})$ is defined as the uniformly bounded series :
\begin{align}
\big\{(v^{\{\infty\}}(t),\eta^{\{\infty\}}(t))\big\}:=\bigg\{\big(\left.\frac{du^{s\{\infty\}}}{ds}\right|_{s=0}(t),\left.\frac{d^2\tau^{s\{\infty\}}}{ds dt}\right|_{s=0}(t)\big)\bigg\}\in \mathbf{A}
\end{align}
In addition, we can find 2 constants $\|\mathbf{v}^{\{\infty\}}\|$ and $\|\boldsymbol\eta^{\{\infty\}}\|$ such that for all $t$:
\begin{align}
v^{\{\infty\}}(t)\leq \|\mathbf{v}^{\{\infty\}}\| \quad \textrm{and} \quad \eta^{\{\infty\}}(t) \leq \|\boldsymbol\eta^{\{\infty\}}\|
\end{align}
We know the explicit expression of the \textit{shadowing direction}: we just need to replace $g$ by $\frac{\partial f}{\partial s}$ in (\ref{shaddir}) and the bounds found earlier are still valid ($\|\frac{\partial f}{\partial s}\|$ is bounded on the compact $\Lambda$). \\

\section{A simpler result}
\label{sec6}
In this section, we prove an easier version of Theorem LSS in which we replace the solution $\big\{(\textbf{v}^{\{T\}},\boldsymbol\eta^{\{T\}})\}$ to the constrained least squares problem (\ref{constraint}) by the shadowing direction we found earlier $\big\{(\textbf{v}^{\{\infty\}},\boldsymbol\eta^{\{\infty\}})\}$.\\ 

\begin{theorem}
If uniform hyperbolicity holds, $M$ is continuously differentiable and for all continuously differentiable function $J:\mathbf{R}^m \times \mathbf{R}\to \mathbf{R}$ whose infinite time average :
\begin{equation}
\langle J\rangle(s)=\lim_{T \rightarrow +\infty} \frac{1}{T}\int_0^T J(u(t),s) dt \quad \textrm{where} \quad \frac{du}{dt}=f(u,s) \quad \textrm{and} \quad u(0)=u_0
\end{equation}
is independent of the initial state $u_0$, let $\big\{(\textbf{v}^{\{\infty\}},\boldsymbol\eta^{\{\infty\}})\}$ be the shadowing direction, then:
\begin{align}
\frac{d \langle J \rangle}{ds}&=\lim_{T \to \infty} \frac{1}{T}\int_0^T \Big[(DJ(u,0))v^{\{\infty\}}+\partial_sJ(u,0)+\eta^{\{\infty\}}\big(J(u,0)-\langle J\rangle(0)\big)\Big]dt\\
\end{align}
\end{theorem}

\begin{proof}
The proof is essentially an exchange of limits through uniform convergence. Since $\langle J\rangle$ is independent of $u_0$, we set $u_0=u^{s}(0)$ as defined in the previous section and we know that $\frac{du^{s}(\tau^{s}(t))}{dt}=(1+\eta^s)f(u^{s}(t),s)$. We can write:
\begin{align*}
&\left.\frac{d \langle J \rangle}{ds}\right|_{s=0}=\lim_{s\rightarrow 0}\frac{\langle J \rangle(s)-\langle J \rangle (0)}{s}\\
&=\lim_{s\rightarrow 0}\lim_{T \rightarrow +\infty}\bigg( \frac{1}{\tau^s(T) \times s}\int_{0}^T J(u^s(\tau^s(t)),s)(1+\eta^s(t))dt-\frac{1}{T \times s}\int_0^TJ(u^0(t),0)dt\bigg)\\
&=\lim_{s\rightarrow 0}\lim_{T \rightarrow +\infty}\bigg( \frac{1}{s}\int_{0}^T \frac{J(u^s(\tau^s(t)),s)(1+\eta^s(t))}{\tau^s(T)}+\frac{J(u^0(t),0)}{\tau^s(T)}-\frac{J(u^0(t),0)}{\tau^s(T)}-\frac{J^0(u(t),0)}{T}dt\bigg)\\
&=\lim_{s\rightarrow 0}\lim_{T \rightarrow +\infty}\bigg( \frac{1}{s}\int_{0}^T \frac{J(u^s(\tau^s(t)),s)-J^0(u(t,0))+\eta^s(t)J^s(u(\tau^s(t)),s)}{\tau^s(T)}+\frac{TJ(u^0(t),0)-\tau^s(T)J(u^0(t),0)}{T\tau^s(T)}dt\bigg)\\
&=\lim_{s\rightarrow 0}\lim_{T \rightarrow +\infty}\bigg( \frac{1}{s}\int_{0}^T \frac{J(u^s(\tau^s(t)),s)-J(u^0(t),0)+\eta^s(t)J(u^s(\tau^s(t)),s)}{\tau^s(T)}+\frac{TJ(u^0(t),0)-\int_0^T(1+\eta^s(t'))dt'J(u^0(t),0)}{T\tau^s(T)}dt\bigg)\\
&=\lim_{s\rightarrow 0}\lim_{T \rightarrow +\infty}\bigg( \frac{1}{s}\int_{0}^T \frac{J(u^s(\tau^s(t)),s)-J(u^0(t),0)+\eta^s(t)J(u^s(\tau^s(t)),s)}{\tau^s(T)}-\frac{\int_0^T(\eta^s(t'))dt'J(u^0(t),0)}{T\tau^s(T)}dt\bigg)\\
&=\lim_{s\rightarrow 0} \lim_{T \rightarrow +\infty} \frac{1}{\tau^s(T)}\int_{0}^T \frac{J(u^s(\tau^s(t)),s)-J(u^0(t),0)}{s}+\frac{\eta^s(t)\Big(J(u^s(\tau^s(t),s)-\frac{\int_0^T J(u^0(x),0)dx}{T}\Big)}{s}dt
\end{align*}

Let us eliminate $\lim_{s\rightarrow 0}$ in the first term. We define :
\begin{equation}
\gamma^s(t)=\frac{dJ(u^s,s)}{ds}(t)=(DJ(u^s(t),s))\frac{du^s(t)}{ds}+\partial_sJ(u^s(t),s)
\end{equation}
Then, thanks to the mean value theorem, for all $t$ there exist an $\xi_t(s) \in[0,s]$ such that:
\begin{equation}
\frac{J(u^s(t),s)-J(u^0(t),0)}{s}=\gamma^{\xi_t(s)}(t)
\end{equation}
Consequently:
\begin{equation}
\lim_{s\rightarrow 0}\lim_{T \rightarrow + \infty} \bigg( \frac{1}{\tau(T)}\int_0^T\frac{(J(u^s(t),s)-J(u^0(t),0))}{s}dt\bigg)=\lim_{s\rightarrow 0}\lim_{T \rightarrow + \infty} \bigg( \frac{1}{\tau(T)}\int_0^T\gamma^{\xi_t(s)}(t)dt\bigg)\\
\end{equation}
We can choose a neighborhood of $\Lambda \times \{0\}$ that contains $(u^s(t),	s)$ for all $t$ (for $s$ sufficiently small) and in which both $(DJ(u,s))$ and $\partial_sJ(u,s)$ are uniformly continuous. Since the $\frac{du^s}{ds}(t)$ are uniformly continuous and bounded, for all $\epsilon>0$ there exists $L>0$ such that for all $|\xi|<L$:
$$\|\gamma^{\xi}(t)-\gamma^0(t)\|<\epsilon \quad \forall t$$
Thus, for all $|s|<L$, $|\xi(s)|\leq|s|<L$ for all $t$, therefore for all $T$ :
\begin{equation}
\Bigg\|\frac{1}{\tau^s(T)}\int_0^T\gamma^{\xi_t(s)}(t)-\frac{1}{\tau^s(T)}\int_0^T\gamma^{0}(t)dt\Bigg\|\leq \frac{1}{\tau(T)}\int_0^T\|\gamma^{\xi_t(s)}(t)-\gamma^0(t)dt\|\leq \frac{T}{\tau^s(T)}\epsilon \leq (1+\|\boldsymbol\eta\|_\infty)\epsilon
\end{equation}
Hence,
\begin{equation}
\Bigg\| \lim_{T \rightarrow + \infty}\bigg( \frac{1}{\tau^s(T)}\int_0^T \gamma^{\xi_t(s)}(t)\bigg)-\lim_{T \rightarrow + \infty} \bigg( \frac{1}{\tau^s(T)}\int_0^T\gamma^{0}(t)\bigg)\Bigg\| \leq (1+\|\boldsymbol\eta\|_\infty)\epsilon
\end{equation}
Finally,
\begin{equation}
\lim_{s\rightarrow0} \lim_{T \rightarrow + \infty} \bigg( \frac{1}{\tau^s(T)}\int_0^T\gamma^{\xi_t(s)}(t)dt\bigg)= \lim_{T \rightarrow + \infty}\bigg( \frac{1}{T}\int_0^T\gamma^{0}(t)dt\bigg)
\end{equation}
which grants us the desired result for the first term via the definition of $\gamma_i^0$.\\

For the second term, $J$ is continuously differentiable thus continuous and the $(u_i^s,\tau_i^s)$ are $i$-uniformly continuously differentiable and bounded. Based on that, for $s$ sufficiently small, we can find a compact neighborhood of $\Lambda \times \{0\}$ that contains $(u_i^s,s)$ for all $i\in \mathbf{Z}$ and in which $J(u,s)$ will be uniformly continuous. Consequently, $\big\{\frac{\eta^s}{s}J(u^s(\tau^s(t)),s),t\in\mathbf{R}^+\big\}$ which can be written $\big\{\frac{\frac{d\tau^s(t)}{dt}-\frac{d\tau^0(t)}{dt}}{s}J(u^s(\tau^s(t)),s),t\in\mathbf{R}^+\big\}$  converges uniformly to $\big\{\left.\frac{d\tau^{s\{\infty\}}}{ds dt}\right|_{s=0}(t)J(u^s(\tau^s(t)),s),t\in\mathbf{R}^+\big\}$ when $s$ goes to $0$. Because the term $\frac{\int_0^T J(u^0(x),0)dx}{T}$ does not depend on $s$ at all, we finally have:
$$\lim_{s\rightarrow 0}\lim_{T \rightarrow + \infty}\frac{1}{\tau^s(T)}\int_0^T \frac{\eta^s(t)\Big(J(u^s(\tau^s(t),s)-\frac{\int_0^T J(u^0(x),0)dx}{T}\Big)}{s}=\lim_{T \rightarrow + \infty}\frac{1}{T}\int_0^T\eta^{\{\infty\}}(t)\big(J(u^0(t),0)-\langle J\rangle(0)\big)\Big]dt$$
which conludes the proof.

\end{proof}

\section{Computational approximation of the shadowing direction}
\label{sec7}
The main task of this section is to provide a bound for  :
\begin{align}
&e^{\{T\}}(t)=v^{\{T\}}(t)-v^{\{\infty\}}(t)\\
\end{align}
for $t\in (0,T)$. $(\textbf{v}^{\{T\}},\boldsymbol\eta^{\{T\}}$) is the solution to the least squares problem:
\begin{align}
\label{least squares}
&\min \int_0^T\big( \|v^{\{T\}}(t)\|^2+\alpha (\eta^{\{T\}}(t))^2\big) dt \quad\\
\label{least squares 2}
&\textrm{s.t.} \quad \frac{dv^{\{T\}}(t)}{dt}=Df(u(t),s) v^{\{T\}}(t)+\eta^{\{T\}}(t)f(u(t),s)+\frac{\partial f}{\partial s}(t),
\end{align}
The shadowing lemma guarantees the existence of a shadowing trajectory, but provides no clear way to compute $\{(v^{\{\infty\}}(t)$, $\eta^{\{\infty\}}(t))\}$. This section suggests that the solution to the least squares problem gives a useful approximation of the shadowing trajectory allowing us to compute$\frac{d\langle J \rangle}{ds}$. Without loss of generality, we consider that $s=0$ in (\ref{least squares 2}). By definition, the shadowing trajectory satisfies: 
\begin{align}
\frac{du^{s}(\tau^{s}(t))}{dt}=(1+\eta^{s}(t))f(u^{s}(\tau^{s}(t)),s)
\end{align}
After taking the derivative to $s$ on both sides for $s=0$, we obtain:
\begin{align}
\label{infinity}
\frac{dv^{\{\infty\}}(t)}{dt}=Df(u(t),s) v^{\{\infty\}}(t)+\eta^{\{\infty\}}(t)f(u(t),s)+\frac{\partial f}{\partial s}(t)
\end{align}
Thus, the shadowing direction satisfies the constraint (\ref{least squares 2}) and:

\begin{align}
\label{inequality}
\int_0^T\big( \|v^{\{T\}}(t)\|^2+\alpha (\eta^{\{T\}}(t))^2\big) dt \leq \int_0^T\big( \|v^{\{\infty\}}(t)\|^2+\alpha (\eta^{\{\infty\}}(t))^2\big) dt \leq T(||\mathbf{v}^{\{\infty\}}||^2 + \alpha||\boldsymbol\eta^{\{\infty\}}||^2)
\end{align}

Combining the constraint equation (\ref{least squares 2}) as well as (\ref{infinity}) we obtain :
\begin{displaymath}
\left\{\begin{array}{l}
\frac{de^{\{T\}+}(t)}{dt}=Df(u(t),s)e^{\{T\}+}(t)\\
\frac{de^{\{T\}-}(t)}{dt}=Df(u(t),s)e^{\{T\}-}(t)\\
\end{array}\right.
\end{displaymath}
Consequently:
\begin{displaymath}
\left\{\begin{array}{l}
e^{\{T\}+}(t)=M(u(0),T-t)e^{\{T\}+}(T)\\
e^{\{T\}-}(t)=M(u(0),t)e^{\{T\}-}(0)\\
\end{array}\right.
\end{displaymath}
where $M$ is the operator we defined in the first section. Since $M(u,0)=\textbf{Id}$ and knowing that $\partial_tM(u,.)$ is continuous, we can find a positive constant $K$ such that for $t$ sufficiently small:
\begin{align}
\|\partial_t M(u,t)\|\leq K
\end{align} 
and for a $t$ such that $tK<1$:
\begin{align}
\|M(u(0),t)v\|\geq (1-tK)\|v\|
\end{align}
for all $v\in U$. Knowing that $M(u(0),t+t')v=M(u(t),t')(M(u(0),t)v)$, for $tK$ and $t'K$ less than $1$, we deduce that:
\begin{align}
\|M(u(0),t+t')v\|\geq (1-t'K)(1-tK)\|v\|
\end{align}
We can iterate this process and refine the timesteps to obtain:
\begin{align}
\|M(u(0),t)v\|\geq e^{-Kt}\|v\|
\end{align}
for any $t$ this time.\\
Consequently:
\begin{align}
\int_0^T \|e^{\{T\}-}(t)\|^2dt \geq \int_0^T e^{-Kt}\|e^{\{T\}-}(0)\|^2dt \\
\geq \|e^{\{T\}-}(0)\|^2 \times \frac{1-e^{-KT}}{K}\label{a1}
\end{align}

On the other hand, since $e^{\{T\}-}(t)=v^{\{T\}-}(t)-v^{\{\infty\}-}(t)$, then:
\begin{align}\label{a2}
\|e^{\{T\}-}(t)\|^2\leq 2\big(\|v^{\{T\}-}(t)\|^2+\|v^{\{\infty\}-}(t)\|^2\big)\leq \frac{2}{\gamma}\big(\|v^{\{T\}}(t)\|^2+\|v^{\{\infty\}}(t)\|^2\big)
\end{align}

Combining (\ref{a1}), (\ref{a2}) and (\ref{inequality}), we obtain:
\begin{align}
\|e^{\{T\}-}(0)\|^2&\leq \frac{K}{1-e^{-KT}}\int_0^T\frac{2}{\gamma}\big(\|v^{\{T\}}(t)\|^2+\|v^{\{\infty\}}(t)\|^2\big)dt\\
&\leq \frac{2K}{\gamma(1-e^{-KT})}\big(\|\eta^{\{\infty\}}(t)\|^2+2\|v^{\{\infty\}}(t)\|^2\big)\times T
\end{align}

For $T$ sufficiently large, this means that we can find a constant $E$ such that: 

\begin{align}
\sup_{t\in (0,T)} \|e^{\{T\}-}(t)\|^2 \leq E\sqrt{T}
\end{align}
because $\|e^{\{T\}-}(t)\|\leq C \lambda^t\|e^{\{T\}-}(0)\|$ with $0<\lambda<1$ (uniform hyperbolicity).
In the same way we obtain :
\begin{align}
\max_i ||e_i^{\{h,T\}+}||\leq E\sqrt{T}
\end{align}

\section{Convergence of least squares shadowing} 
\label{sec8}
In this section, we use the results obtained previously to prove our initial theorem :
\begin{theorem}[THEOREM LSS] For a sufficiently smooth uniformly hyperbolic dynamical system and a $C^1$ cost function $J$, the following limit exists and is equal to:
\begin{align*}
\frac{d \langle J \rangle}{ds}&=\lim_{T \to \infty} \frac{1}{T}\int_0^T \Big[(DJ(u,0))v^{\{T\}}+\partial_sJ(u,0)+\eta^{\{T\}}\big(J(u,0)-\langle J\rangle(0)\big)\Big]dt\\
\end{align*}
\end{theorem}

\begin{proof}
Because $J$ is $C^1$ and $\Lambda$ is compact, there exists a constant $A$ such that $\|DJ(u(t),0)\|<A$ for all $t$. Let $\textbf{e}^{\{T\}}$ be defined as in the previous section, then:
\begin{align}
\Bigg|&  \frac{1}{T}\int_0^T \Big[(DJ(u,0))v^{\{T\}}+\partial_sJ(u,0)+\eta^{\{T\}}\big(J(u,0)-\langle J\rangle(0)\big)\Big]dt\\
&- \frac{1}{T}\int_0^T \Big[(DJ(u,0))v^{\{\infty\}}+\partial_sJ(u,0)+\eta^{\{\infty\}}\big(J(u,0)-\langle J\rangle(0)\big)\Big]dt\Bigg|\\
&=\Bigg| \frac{1}{T}\int_0^T \Big[(DJ(u,0))e^{\{T\}}+\epsilon^{\{T\}}\big(J(u,0)-\langle J\rangle(0)\big)\Big]dt\Bigg|\\
&=\Bigg| \frac{1}{T}\int_0^T \Big[(DJ(u,0))(e^{\{T\}+}+e^{\{T\}-}+e^{\{T\}0})+\epsilon^{\{T\}}\big(J(u,0)-\langle J\rangle(0)\big)\Big]dt\Bigg|\\
&<\Bigg| \frac{1}{T}\int_0^T \Big[(DJ(u,0))(e^{\{T\}+}+e^{\{T\}-})dt\Bigg|+\Bigg| \frac{1}{T}\int_0^T \Big[(DJ(u,0))e^{\{T\}0}+\epsilon^{\{T\}}\big(J(u,0)-\langle J\rangle(0)\big)dt\Bigg|
\end{align}

For the first term:

\begin{align}
\Bigg| \frac{1}{T}\int_0^T \Big[(DJ(u,0))(e^{\{T\}+}+e^{\{T\}-})dt\Bigg|<& \frac{1}{T}\int_0^T \|DJ(u,0)e^{\{T\}+}\|dt \\
&+\frac{1}{T}\int_0^T \|DJ(u,0)e^{\{T\}+}\|dt 
\end{align}
\begin{align}\label{sqrtT}
&\leq \frac{1}{T}\Big(\int_0^{T}C\lambda^{T-t}\|e^{\{T\}+}(T)\|dt+\int_0^{T}C\lambda^{t}\|e^{\{T\}-}(0)\|dt\Big)\\
&\leq \frac{1}{T} \frac{ 2C(1-\lambda^T)}{-\ln(\lambda)}\times E\sqrt{T} \\
&\leq \frac{1}{\sqrt{T}}\times\frac{2CE}{-\ln(\lambda)}
\end{align}

which goes to $0$ when $T$ increases. Thus, we notice that, the differences $e^{\{T\}+}$ and $e^{\{T\}-}$ between the $v^{\{\infty\}+}$ and $v^{\{\infty\}-}$ components of the \textit{shadowing direction} and their approximations $v^{\{T\}+}$ and $v^{\{T\}-}$ decrease extremely fast so that the whole term $\big| \frac{h}{T}\sum_{i=1}^{[\frac{T}{h}]}\Big[(DJ(u_i,s))(e_i^{\{h,T\}+}+e_i^{\{h,T\}-})\big|$ tends to $0$ as $O(\frac{1}{\sqrt{T}})$.\\

On the other hand, there is no reason for $e^{\{T\}0}(t)$ and $\epsilon^{\{T\}}(t)$ to decrease when $T$ increases. The cancellation of the second term is the result of the mutual cancellation of the elements in the summation as we shall see. 
Based on the shadowing trajectory $\{(u^{s}(t),\tau^{s}(t),t\in \mathbb{R}^+)\}$ found in section \ref{sec5}, we consider the new trajectory and time dilation $\{(u^{'s}(t),\tau^{s}+s\int_0^t\epsilon^{\{T\}})(t),t\in\mathbb{R+}\}$ which satisfy the following relation :
\begin{align}
\lim_{s\to 0} \frac{u^{'s}(\tau^s+s\int_0^t\epsilon^{\{T\}})-u^{s}(\tau^s(t))}{s}=e^{\{T\}0}(t)
\end{align}
for all $t$. We can notice that the new trajectory describes exactly the same continuous trajectory as the old one (we have just made a change in the time variable). We obtain by following the same operations we did in section \ref{sec6} (but upside down this time) :
\begin{align}
&\lim_{T\rightarrow \infty}\frac{1}{T}\int_0^T \Big[(DJ(u,0))e^{\{T\}0}+\int_0^t\epsilon^{\{T\}}\big(J(u,0)-\langle J\rangle(0)\big)dt\\
&=\lim_{T \rightarrow +\infty}\lim_{s\rightarrow 0} \frac{1}{\tau^s(T)+s\int_0^t\epsilon^{\{T\}}(T)}\int_{0}^T \frac{J(u^{'s}(\tau^s+s\int_0^t\epsilon^{\{T\}}),s)-J(u^s(\tau^s),s)}{s}\\
&+\frac{s\epsilon^{\{T\}}(t)\Big(J(u^{'s}(\tau^s+s\epsilon^{\{T\}},s)-\frac{\int_0^T J(u^s(x),s)dx}{T}\Big)}{s}dt\\
&=\lim_{T \rightarrow +\infty}\lim_{s\rightarrow 0}\bigg( \frac{1}{\big(\tau^s(T)+s\int_0^t\epsilon^{\{T\}}(T)\big) \times s}\int_{0}^T J(u^{'s}(\tau^s+s\int_0^t\epsilon^{\{T\}}),s)(1+\eta^s(t)+s\epsilon^{\{T\}})dt\\
&-\frac{1}{\tau^s(T) \times s}\int_0^TJ(u^s(\tau^s(t)),s)(1+\eta^s(t))dt\bigg)\\
&=0
\end{align}
This happens because both integrals are the same up to a change of time variable. This concludes the proof. 
\end{proof}\\ 
The fact of approximating an ergodic mean by an average over a finite trajectory is also a source of error in our method. If the dynamical system is mixing, the central limit theorem implies that this error decreases as $O(\sqrt{T})$.

\section{Practicable algorithm}
\label{sec9}
Based on theorem LSS we can derive the following algorithm\footnote{An adjoint version of it can be found in \cite{explosion}.} :
\begin{enumerate}
\item Fix a timestep $h$ and compute a discrete reference trajectory $u_0$, $u_1$, $u_2$,..., $u_n$\footnote{We discard the first points $u_{-n_0}$,...,$u_{-1}$ for $n_0$ sufficiently large so that we are sure to be on the attractor.}. In what follows, we use a standard RK4 scheme to obtain this trajectory. 
\item Compute $\{v_i,\eta_i\}$ by discretizing and solving the KKT set of equations :
\begin{align}
\label{algorithm}
\left \{ \begin{array}{l}
\frac{dv}{dt}-(Df) v-\partial_s f-\eta f=0\\
\frac{dw}{dt}+(Df)^Tw-v=0\\
w(0)=w(T)=0\\
\alpha \eta - w^Tf=0\\
\end{array}\right.
\end{align} 
where $w$ is the Lagrange multiplier function. For the detailed derivation of the KKT equations from the least squares formulation and how to solve it efficiently, the reader can consult \cite{explosion}. This system is well conditioned as shown in \cite{condition}. In this example, we discretized the system as following :
\begin{align}
\label{algorithm}
\left \{ \begin{array}{l}
\frac{v_{i+1}-v_i}{h}-\frac{1}{2}\big((Df(u_i,s))v_i+(Df(u_{i+1},s))v_{i+1}\big)\\
-\frac{1}{2}(\partial_sf(u_i,s)+\partial_sf(u_{i+1},s))-\eta_i\frac{u_{i+1}-u_i}{h}=0\\
\frac{w_{i+1}-w_i}{h}+\frac{1}{2}\big((Df(u_i,s))^Tw_i+(Df(u_{i+1},s))^Tw_{i+1}\big)-v_i=0\\
\alpha\eta_i-w_i^T\frac{u_{i+1}-u_i}{h}=0\\
w_0=w_n=0\\
\end{array}\right.
\end{align} 
\item Finally, compute the desired derivative :
\begin{align}
\frac{d\langle J\rangle}{ds}\approx \frac{1}{n+1}\sum_{i=0}^n\Big((DJ(u_i,s))v_i+\partial_s J(u_i,s)+\eta_i\big(J(u_i,s)- \langle J\rangle\big)\Big)
\end{align}
\end{enumerate} 	
We apply this algorithm to the 3-dimensional Lorenz 63 dynamical system introduced by Edward Lorenz to model the atmospheric convection : 
 $$\left \{ \begin{array}{ll}
\frac{dx}{dt}=\sigma(y-x)\\
\frac{dy}{dt}=x(\rho-(z-z_0))-y\\
\frac{dz}{dt}=xy-\beta (z-z_0)\\
\end{array}\right. $$
It is an autonomous ODE parameterized by $\sigma$, $\beta$, $\rho$, $z_0$ and the quantity of interest is $\langle J \rangle =\lim_{T \to \infty}\frac{1}{T}\int_{t=0}^{T}z(t) dt$, the time average of the component $z$. While fixing $\sigma=10$, $\beta=\frac{8}{3}$, $\rho=25$ and $z_0=0$, we will compute $\frac{d\langle J \rangle}{dz_0}$ which is clearly equal to $1$ (when $z_0$ increases the attractor translates in the $z$ direction).We also have an analytical expression for the \textit{shadowing direction} : $\{v_i^{\infty}=(0,0,1),\eta_i^{\infty}=0\}$ for all $i$. We set $h=0.02$ and compute $\frac{d\langle J \rangle}{dz_0}$  for different integration lengths $T$. We notice that the algorithm gives a very good estimate of the sensitivity and that this estimate improves as $T$ increases (figure \ref{figure1}). As expected, the error decreases as $O(\sqrt{T})$. Then, we fix $T=100$ and compare the computed \textit{shadowing direction} with the theoretical one for two different values of $\alpha$ (figure \ref{figure2}). First, both computations give a good approximation of $\frac{d\langle J \rangle}{dz_0}$ : $0.99$ for $\alpha=10^{16}$ and $0.96$ for $\alpha=100$. For $\alpha=10^{16}$, as we approach the "middle" of the integration length, the difference between the theoretical and the approximated \textit{shadowing direction} decreases and reaches machine precision. This comes from the expanding/contracting properties of the stable and unstable subspaces presented in section \ref{sec:uniformhyperbolicity}. As for a lower penalty $\alpha=100$ which allows a higher value for the time dilation factors, the stable and unstable components of the approximated \textit{shadowing direction} are also very close to the theoretical ones (otherwise $\log(\|v_{\textrm{approx}}-v_{\infty}\|)$ would grow exponentially) but the neutral component can be significantly different from $\eta_i^{\infty}=0$. In fact, this bigger gap is compensated by the high-valued time dilation factors $\eta_i$. Either way, both values of $\alpha$ give an acceptable estimation of the sensitivity.

\begin{figure}[htb]
\begin{center}
\includegraphics[scale=0.6]{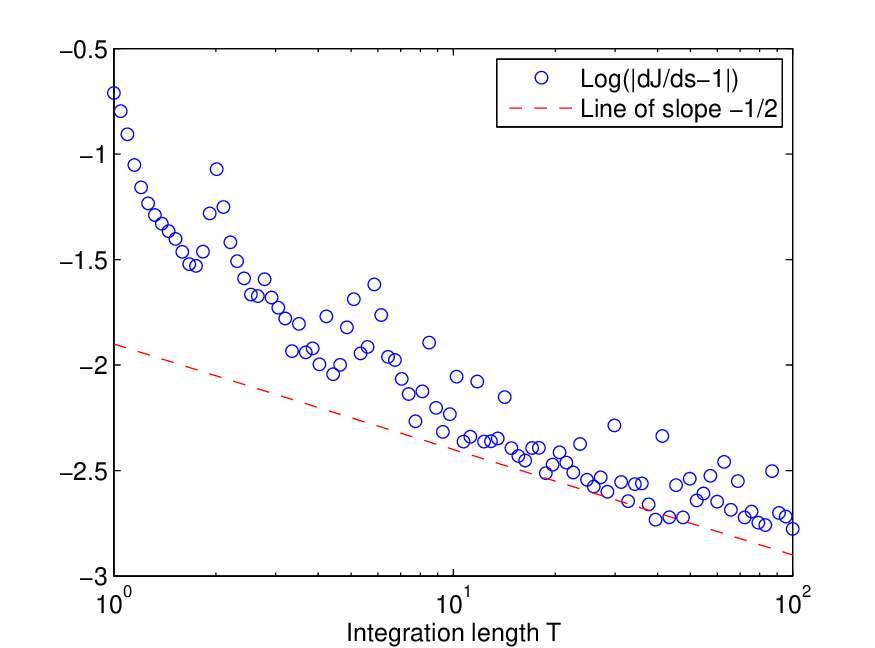}
\caption{$\log(|\frac{d\langle z \rangle}{dz_0}-1|)$ for $h=0.02$, $\alpha=100$ and different integration time lengths.}\label{figure1}
\end{center}
\end{figure}

\begin{figure}\label{figure2}
\setlength{\overfullrule}{0pt}
\begin{center}
\begin{tabular}{cc}
            % after \\: \hline or \cline{col1-col2} \cline{col3-col4} ...
      \includegraphics[scale=0.48]{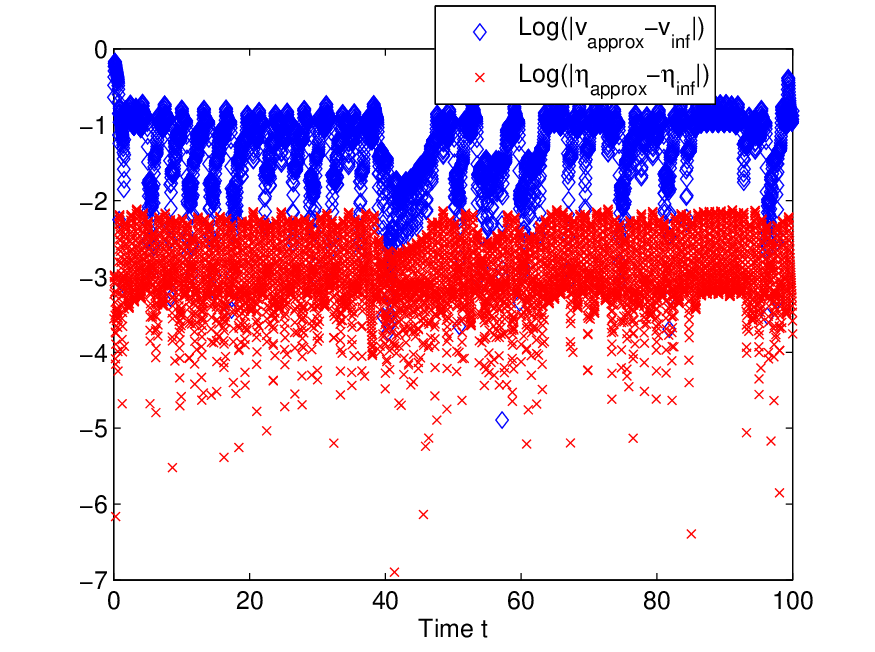} & \includegraphics[scale=0.48]{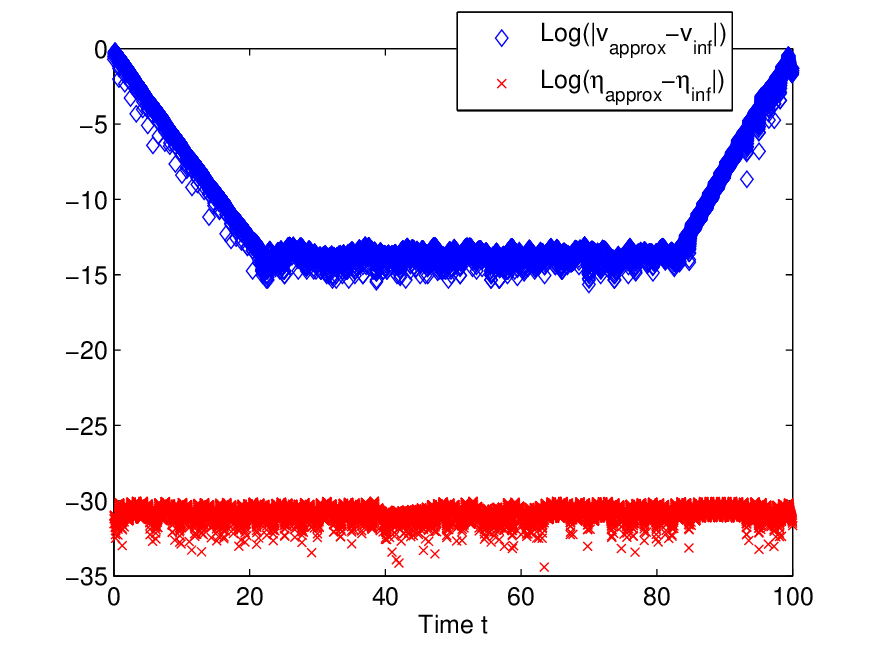} \\
      {$\alpha=100$} & {$\alpha=10^{16}$} \\
\end{tabular} 
\end{center}\caption{$\log(\|v_{\textrm{approx}}-v_{\infty}\|)$ in blue and $\log(|\eta_{\textrm{approx}}-\eta_{\infty}|)$ in red for two different values of $\alpha$}\label{figure2}
\end{figure}

\section{Conclusion}
As we have shown through this paper, LSS gives us a good estimation for $\frac{d\langle J \rangle}{ds}$ when the dynamical system is uniformly hyperbolic. After running a simulation for a given $s$ and an arbitrary initial condition $u_0$, we obtain a reference trajectory $\big\{u^s(t),t\in (0,T)\big\}$. If we had access to the \textit{shadowing direction}, we would easily compute :
\begin{align}
\label{concl}
\frac{d \langle J \rangle}{ds} \approx \frac{1}{T}\int_0^T\Big[(DJ(u^s,s))v^{\{\infty\}}+\partial_sJ(u^s,s)+\eta^{\{\infty\}}\big(J(u^s,s)-\langle J\rangle(s)\big)\Big]dt
\end{align}
However, in real-life problems we usually do not have access to the \textit{stable} and \textit{unstable subspaces} around each $u^s(t)$ prohibiting the usage of the closed form expression of $v^{\{\infty\}}$ and $\eta^{\{\infty\}}$. Thus, we have no other choice than computing an approximation of the \textit{shadowing direction}. This approximation is given by the solution to the least squares problem:
\begin{equation}
\begin{split}
&\min \int_{0}^T( \|v^{\{T\}}\|^2+\alpha (\eta^{\{T\}})^2)dt \\
&\textrm{s.t.} \quad \frac{dv^{\{T\}}}{dt}=(Df(u,s))v^{\{T\}}+\partial_s f(u,s)+\eta^{\{T\}}f(u,s),
\end{split}
\end{equation}
After solving this quadratic optimization problem, we estimate $\frac{d\langle J \rangle}{ds}$ using expression (\ref{concl}) again where the $(v^{\{\infty\}},\eta^{\{\infty\}})$ are replaced by $(v^{\{T\}},\eta^{\{T\}})$. As we have seen previously, this estimation converges to the real value of $\frac{d\langle J \rangle}{ds}$ when the integration lapse $T$ increases.

\end{document}